\newtheorem{theorem}{Theorem}[section]
\newtheorem{lemma}[theorem]{Lemma}
\newtheorem{proposition}[theorem]{Proposition}
\newtheorem{remark}{Remark}
\title{Fixed Point Theorem: Variants, Affine Context and Some Consequences}
\author[1]{Anderson L. A. de Araujo}
\author[1]{Edir J. F. Leite}
\affil[1]{Universidade Federal de Vi\c{c}osa, Departamento de  Matem\'atica\\ 

Av. Peter Henry Rolfs, s/n, Vi\c{c}osa, MG, Brazil, CEP 36570-900\\

E-mail: {\tt anderson.araujo@ufv.br, edirjrleite@ufv.br}}
\date{}                     
\begin{document}
\setpagewiselinenumbers

\maketitle
\begin{abstract}

\end{abstract}

In this work, we will present variants Fixed Point Theorem for the affine and classical contexts, as a consequence of general Brouwer’s Fixed Point Theorem. For instance, the affine results will allow working on affine balls, which are defined through the affine $L^{p}$ functional $\mathcal{E}_{p,\Omega}^p$  introduced by Lutwak, Yang and Zhang in the work \textit{Sharp affine $L_p$ Sobolev inequalities}, J. Differential Geom. 62 (2002), 17-38 for $p > 1$ that is non convex and does not represent a norm in $\mathbb{R}^m$. Moreover, we address results for discontinuous functional at a point. As an application, we study critical points of the sequence of affine functionals $\Phi_m$ on a subspace $W_m$ of dimension $m$ given by	
\[
		\Phi_m(u)=\frac{1}{p}\mathcal{E}_{p, \Omega}^{p}(u) - \frac{1}{\alpha}\|u\|^{\alpha}_{L^\alpha(\Omega)}- \int_{\Omega}f(x)u dx,
	\]
	 where $1<\alpha<p$, $[W_m]_{m \in \mathbb{N}}$ is dense in $W^{1,p}_0(\Omega)$ and $f\in L^{p'}(\Omega)$, with $\frac{1}{p}+\frac{1}{p'}=1$.  \\

{\bf Keywords:} 
Fixed Point Theorems; Affine Balls; Affine $L^{p}$ Energy; Nontrivial Critical Point\\
{\emph{2020 MSC}}: 47H10; 93B24; 46T20; 35B38

\section{Introduction}\label{S1}   

Let $\Omega\subset\mathbb{R}^n$, $n\geq 2$ be a bounded open subset. The affine $L^{p}$ functional (or energy) for functions $u \in W^{1,p}_0(\Omega)$ is given by
$$
\mathcal{E}_{p,\Omega}(u):= \gamma_{n,p} \left( \int_{\mathbb{S}^{n-1}} \Vert\nabla_\xi u\Vert_{L^p(\Omega)}^{-n}\ d\sigma(\xi)\right)^{-\frac{1}{n}},
$$
where $\gamma_{n,p} = \left( 2 \omega_{n+p-2} \right)^{-1} \left(n \omega_n \omega_{p-1}\right) \left(n \omega_n\right)^{p/n}$. In this paper, $\nabla_\xi u(x)$ represents the directional derivative $\nabla u(x) \cdot \xi$ with respect to the direction $\xi \in \mathbb{S}^{n-1}$ and $\omega_\kappa$ is the volume of the unit Euclidean ball in $\mathbb{R}^\kappa$. 

Below are some important properties related to this functional on $W^{1,p}_0(\Omega)$, with $p>1$. Namely:

\begin{itemize}
\item[(I)] ${\cal E}^p_{p,\Omega}$ does not define a norm in $W^{1,p}_0(\Omega)$ (see Proposition 4.1 of \cite{LM});
\item[(II)] The affine $L^p$ energy ${\cal E}^p_{p,\Omega}$ is non convex on $W^{1,p}_0(\Omega)$ (see Proposition 4.1 of \cite{LM});
\item[(III)] The affine ball $\overline{B}_\varrho(0):=\{u \in W^{1,p}_0(\Omega); {\cal E}_{p,\Omega}(u)\leq \varrho\}$ is compactly immersed into $L^s(\Omega)$ for any $1 \leq s < p^*$, where $p^*:=\frac{np}{n-p}$ in case $1 < p < n$, and for any $s \geq 1$, in case $p \geq n$, (see Theorem 6.5.3 of
\cite{T1}) and is unbounded in $W^{1,p}_0(\Omega)$ (see Theorem 2 of \cite{HJM5});
\item[(IV)] The affine $L^p$ functional $\mathcal{E}_{p,\Omega}^p$ is strongly continuous and Fréchet differentiable in $W^{1,p}_0(\Omega)$ (see Theorem 1.1 of \cite{LM2});
\item[(V)] The affine $L^p$ energy $\mathcal{E}_{p,\Omega}^p$ is $C^1$ differentiable in $W^{1,p}_0(\Omega)\backslash\{0\}$ (see Theorem 1.1 of \cite{LM2}).
\end{itemize}

In 2002, the affine $L^p$ energy $\mathcal{E}_{p,\Omega}$ was introduced in \cite{LYZ2} for $p > 1$. In the sequence several related results were developed (see for example in \cite{NHJM, HS, HJM1, HJM3, HJM4, HJM5, HJS, LM, LM2, LXZ, LYZ2, LYZ3, T1}). For $p=1$, we refer to \cite{LM1} and references therein.

Let $\mathcal{B}=\{w_1,w_2,\dots\}$ be a Schauder basis of $W^{1,p}_0(\Omega)$  (see \cite{FJN,Brezis}). For each $m\geq 1$, let 
\[
W_m:=[w_1,w_2,\dots,w_m]
\]  be the $m$-dimensional subspace of $W^{1,p}_0(\Omega)$ generated by $\{w_1,w_2,\dots,w_m\}$  with norm  induced from  $W^{1,p}_0(\Omega)$ and consider $\zeta=(\zeta_1,\zeta_2,\dots,\zeta_m) \in \mathbb{R}^m$. We defined the functions 
\[
\lfloor\zeta\rfloor_m:=\mathcal{E}_{p,\Omega}\left(\sum_{j=1}^{m}\zeta_jw_j\right)
\]
and
\[
\|\zeta\|_{1,p,m}:=\Vert\sum_{j=1}^{m}\zeta_jw_j\Vert_{W^{1,p}_0(\Omega)}.
\]

It is clear that $\|\cdot\|_{1,p,m}$ define a norm in $\mathbb{R}^m$ (see \cite{AL} for the details).

By using the above notation, we can identify the spaces $(W_m, \|\cdot \|_{W^{1,p}_0(\Omega)})$ and $(\mathbb{R}^m,\|\cdot\|_{1,p,m})$ by the isometric linear
	transformation
	\begin{equation}\label{transf}
	u=\sum_{j=1}^{m}\zeta_{j}w_{j}\in W_m\mapsto
	\zeta=(\zeta_{1},\ldots,\zeta_{m})\in\mathbb{R}^{m}.
	\end{equation}

Below are some fundamental inequalities for the development of the paper: We start with the affine $L^q$ Poincaré-Sobolev inequality on $W^{1,p}_0(\Omega)$, $q\in [1,p^*]$ (see inequality (4) in \cite{LM}): There is an optimal constant $\mu_{p,q}^{\cal A} = \mu_{p,q}^{\cal A}(\Omega)>0$ such that

\begin{equation} \label{APSob}
\mu_{p,q}^{\cal A} \Vert u \Vert_{L^q(\Omega)} \leq {\cal E}_{p,\Omega}(u).
\end{equation}

Now, by Theorem 9 of \cite{HJM5}, we have
\[
C_{n,p}(\Omega)\Vert u\Vert_{L^p(\Omega)}^{\frac{n-1}{n}}\Vert\nabla u\Vert_{L^p(\Omega)}^{\frac{1}{n}}\leq\mathcal{E}_{p,\Omega}(u)\leq\Vert\nabla u\Vert_{L^p(\Omega)}
\]	
for all $u\in W^{1,p}_0(\Omega)$, where $C_{n,p}(\Omega)$ is a positive constant. For a proof of second inequality see for example to page 33 in \cite{LYZ2}. Thus, there exists a constant $C=C(n,p,m,\Omega)>0$ such that
\begin{equation}\label{ine1}
C\Vert\nabla u\Vert_{L^p(\Omega)}\leq\mathcal{E}_{p,\Omega}(u)\leq\Vert\nabla u\Vert_{L^p(\Omega)}
\end{equation}
for all $u\in W_m$. It is clear that the norms $\Vert\nabla \cdot\Vert_{L^p(\Omega)}$, $\Vert \cdot\Vert_{L^p(\Omega)}$ and $\Vert\nabla_\xi \cdot\Vert_{L^p(\Omega)}$ are equivalents in $W_m$ for every $\xi\in \mathbb{S}^{n-1}$. Moreover, by Lemma 1 of \cite{HJM5}, there is a positive constant $D_1>0$ such that
\[
D_1\Vert u\Vert_{L^p(\Omega)}\leq \Vert\nabla_\xi u\Vert_{L^p(\Omega)}\leq\Vert\nabla u\Vert_{L^p(\Omega)}
\]
for every $\xi\in \mathbb{S}^{n-1}$ and for all $u\in W^{1,p}_0(\Omega)$. Thus, there exists $D_2>0$ independent of $\xi$, such that

\begin{equation}\label{ine2}
D_2\Vert \nabla u\Vert_{L^p(\Omega)}\leq \Vert\nabla_\xi u\Vert_{L^p(\Omega)}\leq\Vert\nabla u\Vert_{L^p(\Omega)}
\end{equation}
for every $\xi\in \mathbb{S}^{n-1}$ and for all $u\in W_m$. From inequalities (\ref{ine1}) and (\ref{ine2}), we get

\begin{equation}\label{ine3}
C\Vert\nabla_\xi u\Vert_{L^p(\Omega)}\leq\mathcal{E}_{p,\Omega}(u)\leq D_3\Vert\nabla_\xi u\Vert_{L^p(\Omega)}
\end{equation}
for every $\xi\in \mathbb{S}^{n-1}$ and for all $u\in W_m$, where $C,D_3>0$ are independents of $\xi$.

To prove our main result, we will need to understand some results of the affine theory in finite dimension. In this sense, we concentrate attention on the following central ingredients. Namely:

\begin{itemize}
\item[(i)] $\lfloor\cdot\rfloor_m$ does not define a norm in $\mathbb{R}^m$;
\item[(ii)] The affine ball $\overline{B}^m_\varrho(0):=\{z \in \mathbb{R}^m; \lfloor z\rfloor_{m}\leq \varrho\}$ is a non convex subset of $\mathbb{R}^m$;
\item[(iii)] The affine ball $\overline{B}^m_\varrho(\zeta_0):=\{z \in \mathbb{R}^m; \lfloor z-\zeta_0\rfloor_{m}\leq \varrho\}$ is a closed and bounded subset of $\mathbb{R}^m$;
\item[(iv)] $\overline{B}^m_\varrho(\zeta_0)$ and $\overline{B}^{p,m}_1(0):=\{z \in \mathbb{R}^m; \|z\|_{1,p,m}\leq 1\}$ are homeomorphic in $(\mathbb{R}^m, \|\cdot\|_{1,p,m})$ (see Lemma \ref{lemhomeo} in Section \ref{s1}).
\end{itemize}

Note that (i) and (ii) follow of isometric linear transformation (\ref{transf}) and Proposition 4.1 of \cite{LM}. Note also that $\lfloor\cdot\rfloor_m$ does not satisfy the triangular inequality in $\mathbb{R}^m$. For (iii) applying the weak lower semi-continuity of the functional $u\in W^{1,p}_0(\Omega)\rightarrow {\cal E}_{p,\Omega}^p (u)$ (see Theorem 2.1 of \cite{LM}), we obtain $\overline{B}^m_\varrho(\zeta_0)$ is a closed subset of $\mathbb{R}^m$. Now, the boundedness follows from the inequality (\ref{ine1}). 

Our main objective in this work is to obtain a topological tool to be able to attack problems in the affine and classical contexts. Our first result is an affine Fixed Point Theorem. Namely:
	
	\begin{theorem}\label{prop1}
		Let $F: (\mathbb{R}^m, \|\cdot\|_{1,p,m}) \rightarrow (\mathbb{R}^m, \|\cdot\|_{1,p,m})$ be a continuous function such that $\left\langle F(\zeta),\zeta\right\rangle\geq 0$ for every $\zeta \in \mathbb{R}^m$ with $\lfloor\zeta\rfloor_{m}=\varrho$ for some $\varrho>0$,  and $\langle \cdot, \cdot \rangle ^{1/2}=|\cdot|_2$, where $|x|_2$ denote the usual euclidean norm in $\mathbb{R}^m$. Then, there exists $z_0$ in the closed affine ball $\overline{B}^m_\varrho(0)=\{z \in \mathbb{R}^m; \lfloor z\rfloor_{m}\leq \varrho\}$ such that $F(z_0)=0$.
	\end{theorem}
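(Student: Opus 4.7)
The plan is to argue by contradiction via the standard ``antipodal pushout'' trick, adapted to the (non-convex) affine ball. Suppose $F(z)\neq 0$ for every $z\in\overline{B}^m_\varrho(0)$. Because $\mathcal{E}_{p,\Omega}$ is positively homogeneous of degree one and, by the leftmost inequality in (\ref{ine1}), vanishes on $W_m$ only at $0$, the finite-dimensional gauge $\lfloor\cdot\rfloor_m:\mathbb{R}^m\to[0,\infty)$ is continuous, positively homogeneous and satisfies $\lfloor\zeta\rfloor_m=0\Leftrightarrow\zeta=0$. Consequently $\lfloor F(z)\rfloor_m>0$ on the whole affine ball, and the map
\[
G:\overline{B}^m_\varrho(0)\longrightarrow\overline{B}^m_\varrho(0),\qquad G(z):=-\varrho\,\frac{F(z)}{\lfloor F(z)\rfloor_m},
\]
is well defined and continuous. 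By positive homogeneity, $\lfloor G(z)\rfloor_m=\varrho$ for every $z$, so $G$ actually takes values in the ``affine sphere'' $\{\zeta\in\mathbb{R}^m:\lfloor\zeta\rfloor_m=\varrho\}$.

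The next step is to apply Brouwer's fixed point theorem to $G$. Since $\overline{B}^m_\varrho(0)$ is not convex (property (ii)), one cannot invoke Brouwer directly; however, by item (iv) (i.e.\ Lemma~\ref{lemhomeo}) the closed affine ball is homeomorphic to the standard closed unit ball $\overline{B}^{p,m}_1(0)$ in the finite-dimensional normed space $(\mathbb{R}^m,\|\cdot\|_{1,p,m})$. The fixed point property is a topological invariant, so $\overline{B}^m_\varrho(0)$ inherits it from $\overline{B}^{p,m}_1(0)$. Thus $G$ admits a fixed point $z_0\in\overline{B}^m_\varrho(0)$, and the above observation forces $\lfloor z_0\rfloor_m=\varrho$.

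Finally, the fixed-point identity $z_0=-\varrho F(z_0)/\lfloor F(z_0)\rfloor_m$ yields
\[
\langle F(z_0),z_0\rangle=-\frac{\varrho}{\lfloor F(z_0)\rfloor_m}\,|F(z_0)|_2^{\,2}<0,
\]
which contradicts the boundary hypothesis $\langle F(\zeta),\zeta\rangle\geq 0$ whenever $\lfloor\zeta\rfloor_m=\varrho$. Hence the contradiction assumption is false and $F$ has a zero in $\overline{B}^m_\varrho(0)$. The only delicate point in the argument is the passage from Brouwer's theorem on convex bodies to the non-convex affine ball, and this is exactly absorbed into the homeomorphism statement (iv); everything else (continuity of $\lfloor\cdot\rfloor_m$, nonvanishing outside $0$, positive homogeneity) is either part of the already-quoted affine toolbox or is immediate from the definition.
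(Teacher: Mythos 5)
Your argument is correct and follows essentially the same route as the paper: assume $F$ has no zero, form the continuous map $z\mapsto -\varrho F(z)/\lfloor F(z)\rfloor_m$ into the affine ball, invoke the generalized Brouwer theorem through the homeomorphism of Lemma \ref{lemhomeo}, and derive a contradiction with the boundary condition at the fixed point. The only (immaterial) difference is in the final step, where you compute $\langle F(z_0),z_0\rangle=-\varrho\,|F(z_0)|_2^2/\lfloor F(z_0)\rfloor_m<0$ directly, while the paper reaches the same contradiction via $0<\varrho^2\leq c(m)^2\langle x_0,g(x_0)\rangle\leq 0$.
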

	
	This result is well known when the function involved is a norm and the ball is a convex subset of $\mathbb{R}^m$ (see \cite{Araujo-F2} and \cite{k}). As a counterpart of the classical context (see for instance in \cite{af1, Araujo-F2, AL, s}  and references therein), Theorem \ref{prop1} associated with the Galerkin method will be very useful for solving affine problems. We refer to \cite{HJM5, LM, LM2} for papers focused on affine problems via variational methods.
	
	Our second result is a affine Fixed Point Theorem for discontinuous functional at a point, that is, a variant of the Theorem \ref{prop1}. Precisely:

	\begin{theorem}\label{prop2}
		Let $F: (\mathbb{R}^m\setminus\{y_0\}, \|\cdot\|_{1,p,m}) \rightarrow (\mathbb{R}^m, \|\cdot\|_{1,p,m})$ be a continuous function such that $\left\langle F(\zeta),\zeta-\zeta_0\right\rangle\geq 0$ for every $\zeta \in \mathbb{R}^m\setminus\{y_0\}$ with $\lfloor\zeta-\zeta_0\rfloor_{m}=\varrho$ for some $0<\varrho<\lfloor \zeta_0-y_0\rfloor_{m}$  and $\langle \cdot, \cdot \rangle ^{1/2}=|\cdot|_2$. Then, there exists $z_0$ in the closed affine ball $\overline{B}^m_\varrho(\zeta_0)=\{z \in \mathbb{R}^m; \lfloor z-\zeta_0\rfloor_{m}\leq \varrho\}$ such that $F(z_0)=0$.
	\end{theorem}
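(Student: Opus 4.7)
The strategy is to reduce to Theorem \ref{prop1} by a translation together with a continuous extension of $F$ across the singular point $y_0$. The key preliminary observation is that the hypothesis $\varrho < \lfloor \zeta_0 - y_0 \rfloor_m$ is precisely what forces $y_0$ to lie strictly outside the closed affine ball $\overline{B}^m_\varrho(\zeta_0)$; one needs here the symmetry $\lfloor -\zeta \rfloor_m = \lfloor \zeta \rfloor_m$, which follows from $\mathcal{E}_{p,\Omega}(-u) = \mathcal{E}_{p,\Omega}(u)$ (since $\nabla_\xi(-u) = -\nabla_\xi u$) and the definition of $\lfloor \cdot \rfloor_m$. Consequently, $F$ is continuous on the entire compact set $\overline{B}^m_\varrho(\zeta_0)$.

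I would next introduce the translate $G(\eta) := F(\eta + \zeta_0)$, which is well-defined and continuous on $\overline{B}^m_\varrho(0)$. Since property (iii) in the introduction ensures that this ball is closed in $(\mathbb{R}^m, \|\cdot\|_{1,p,m})$, Tietze's extension theorem, applied componentwise, supplies a continuous $\widetilde{G}: \mathbb{R}^m \to \mathbb{R}^m$ that agrees with $G$ on $\overline{B}^m_\varrho(0)$. The boundary inequality required by Theorem \ref{prop1} is then automatic: for $\lfloor \eta \rfloor_m = \varrho$, setting $\zeta := \eta + \zeta_0$ one has $\lfloor \zeta - \zeta_0 \rfloor_m = \varrho$, so $\langle \widetilde{G}(\eta), \eta \rangle = \langle F(\zeta), \zeta - \zeta_0 \rangle \geq 0$ by hypothesis.

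Applying Theorem \ref{prop1} to $\widetilde{G}$ then yields some $\eta_0 \in \overline{B}^m_\varrho(0)$ with $\widetilde{G}(\eta_0) = 0$. Since $\widetilde{G}$ coincides with $G$ on this ball, $F(\eta_0 + \zeta_0) = 0$, and $z_0 := \eta_0 + \zeta_0 \in \overline{B}^m_\varrho(\zeta_0)$ is the desired zero.

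The main, and essentially the only, obstacle lies in the very first step, namely the verification that $y_0 \notin \overline{B}^m_\varrho(\zeta_0)$. This is slightly delicate because $\lfloor \cdot \rfloor_m$ is neither a norm nor convex (properties (i)--(ii)), so no triangle-inequality argument is available; only evenness of the affine energy is, and the hypothesis has been formulated in exactly the form that makes this enough. Once this is settled, the Tietze extension is routine and the rest is bookkeeping on top of Theorem \ref{prop1}.
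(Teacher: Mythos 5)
Your proof is correct, but it takes a different route from the paper's. The paper argues by contradiction directly on the translated affine ball: assuming $F$ has no zero on $\overline{B}^m_\varrho(\zeta_0)$, it defines $g_0(x)=-\frac{\varrho}{\lfloor F(x)\rfloor_m}F(x)+\zeta_0$, checks (via homogeneity and evenness of $\lfloor\cdot\rfloor_m$) that $g_0$ maps $\overline{B}^m_\varrho(\zeta_0)$ continuously into itself, invokes the generalized Brouwer theorem through Lemma \ref{lemhomeo} to get a fixed point $x_0$ with $\lfloor x_0-\zeta_0\rfloor_m=\varrho$, and contradicts the boundary inequality --- i.e.\ it re-runs the proof of Theorem \ref{prop1} rather than citing it. You instead reduce to Theorem \ref{prop1} as a black box by translating to the origin and extending $G(\eta)=F(\eta+\zeta_0)$ continuously from the closed set $\overline{B}^m_\varrho(0)$ to all of $\mathbb{R}^m$ via Tietze; the boundary sphere $\{\lfloor\eta\rfloor_m=\varrho\}$ lies inside the ball where $\widetilde{G}=G$, so the sign condition transfers and the zero produced by Theorem \ref{prop1} is a zero of $F$. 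Your version has the merit of isolating explicitly the one place where the hypothesis $\varrho<\lfloor\zeta_0-y_0\rfloor_m$ is used --- namely, via the evenness $\lfloor-\zeta\rfloor_m=\lfloor\zeta\rfloor_m$ of the affine energy, to guarantee $y_0\notin\overline{B}^m_\varrho(\zeta_0)$ and hence continuity of $F$ on the whole ball --- a point the paper's proof relies on but leaves implicit; the price is the (routine) appeal to Tietze, which the paper's self-contained repetition of the Brouwer argument avoids. Both arguments are sound, and both are consistent with the paper's later remark that the upper bound on $\varrho$ can be dropped when $F$ is continuous everywhere.
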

	
	Now, our third result is a new variant of classical Fixed Point Theorems, that is, when the function involved represents a norm. Namely:
	
	\begin{theorem}\label{prop3}
		Let $\Vert\cdot\Vert_m$ be a general norm in $\mathbb{R}^m$ and $F: (\mathbb{R}^m\setminus\{y_0\}, \|\cdot\|_{1,p,m}) \rightarrow (\mathbb{R}^m, \|\cdot\|_{1,p,m})$ be a continuous function such that $\left\langle F(\zeta),\zeta-\zeta_0\right\rangle\geq 0$ for every $\zeta \in \mathbb{R}^m\setminus\{y_0\}$ with $\Vert\zeta-\zeta_0\Vert_{m}=\varrho$ for some $0<\varrho<\Vert \zeta_0-y_0\Vert_{m}$  and $\langle \cdot, \cdot \rangle ^{1/2}=|\cdot|_2$. Then, there exists $z_0$ in the closed ball $\overline{B}_\varrho(\zeta_0):=\{z \in \mathbb{R}^m; \Vert z-\zeta_0\Vert_{m}\leq \varrho\}$ such that $F(z_0)=0$.
	\end{theorem}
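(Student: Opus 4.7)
The plan is to adapt the standard Brouwer-based argument for continuous vector fields on a convex ball, exploiting the fact that, unlike the affine-ball setting of Theorems \ref{prop1}--\ref{prop2}, the ball $\overline{B}_\varrho(\zeta_0)$ is now already convex because $\|\cdot\|_m$ is a genuine norm. No homeomorphism-type reduction is needed.

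First, I would check that $F$ is actually continuous on the whole ball $\overline{B}_\varrho(\zeta_0)$. The condition $0<\varrho<\|\zeta_0-y_0\|_m$ says exactly that $\|y_0-\zeta_0\|_m>\varrho$, so $y_0\notin \overline{B}_\varrho(\zeta_0)$ and the singular point plays no role. Since all norms on $\mathbb{R}^m$ are equivalent, continuity of $F$ with respect to $\|\cdot\|_{1,p,m}$ is the same as continuity with respect to $\|\cdot\|_m$ (or the Euclidean norm), and $\overline{B}_\varrho(\zeta_0)$ is compact and convex in this common topology.

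Second, argue by contradiction: assume $F(z)\neq 0$ for every $z\in\overline{B}_\varrho(\zeta_0)$. Define
\[
T:\overline{B}_\varrho(\zeta_0)\longrightarrow \overline{B}_\varrho(\zeta_0),\qquad T(z):=\zeta_0-\varrho\,\frac{F(z)}{\|F(z)\|_m}.
\]
Then $T$ is well defined and continuous, and $\|T(z)-\zeta_0\|_m=\varrho$, so $T$ actually maps into $\partial \overline{B}_\varrho(\zeta_0)\subset \overline{B}_\varrho(\zeta_0)$. Brouwer's fixed point theorem (applicable because $\overline{B}_\varrho(\zeta_0)$ is nonempty, compact and convex) yields $z^{\ast}\in \overline{B}_\varrho(\zeta_0)$ with $T(z^{\ast})=z^{\ast}$, and consequently
\[
z^{\ast}-\zeta_0=-\varrho\,\frac{F(z^{\ast})}{\|F(z^{\ast})\|_m},\qquad \|z^{\ast}-\zeta_0\|_m=\varrho.
\]
Since $\|z^{\ast}-\zeta_0\|_m=\varrho$ and $z^\ast\neq y_0$ (as $y_0\notin\overline{B}_\varrho(\zeta_0)$), the hypothesis of the theorem applies, giving
\[
0\le \langle F(z^{\ast}),\,z^{\ast}-\zeta_0\rangle = -\frac{\varrho}{\|F(z^{\ast})\|_m}\,\langle F(z^{\ast}),F(z^{\ast})\rangle = -\frac{\varrho\,|F(z^{\ast})|_2^{\,2}}{\|F(z^{\ast})\|_m}<0,
\]
which is a contradiction. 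Therefore some $z_0\in\overline{B}_\varrho(\zeta_0)$ satisfies $F(z_0)=0$.

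There is essentially no hard step here: the main point is the structural observation that, for a true norm $\|\cdot\|_m$, convexity of the ball is immediate and Brouwer applies directly, so the only things to verify carefully are (a) that $y_0$ lies outside $\overline{B}_\varrho(\zeta_0)$ (so $F$ is continuous on this set) and (b) that the sign hypothesis is invoked at a boundary point, which is automatic since the candidate fixed point satisfies $\|z^{\ast}-\zeta_0\|_m=\varrho$ by construction.
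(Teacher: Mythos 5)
Your proof is correct and follows essentially the same route as the paper: argue by contradiction, map the ball to its boundary via $z\mapsto\zeta_0-\varrho F(z)/\Vert F(z)\Vert_m$, apply Brouwer, and derive a sign contradiction from the hypothesis at the resulting boundary fixed point (the paper reduces Theorem~\ref{prop3} to the proof of Theorem~\ref{prop2} with $\lfloor\cdot\rfloor_m$ replaced by $\Vert\cdot\Vert_m$, which is exactly this argument). Your only deviations are cosmetic: you invoke classical Brouwer directly via convexity of the norm ball instead of the general version used for the non-convex affine balls, and your final inner-product computation avoids the norm-equivalence constant $c(m)$ that the paper carries through.
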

	
	Note that, if $F: (\mathbb{R}^m, \|\cdot\|_{1,p,m}) \rightarrow (\mathbb{R}^m, \|\cdot\|_{1,p,m})$ is a continuous function, then Theorems \ref{prop2} and \ref{prop3} hold for all $\varrho>0$, that is, we can remove the upper bound of $\varrho$ required on the hypotheses of these results.
	
	For each positive integer $m$, we define the affine functional $\Phi_m:W_m\rightarrow\mathbb{R}$ given by
	\begin{equation}\label{funct.}
		\Phi_m(u)=\frac{1}{p}\mathcal{E}_{p, \Omega}^{p}(u) - \frac{1}{\alpha}\|u\|^{\alpha}_{L^\alpha(\Omega)}- \int_{\Omega}f(x)u dx,
	\end{equation}
	whose Gâteaux derivative $\Phi_m':W_m\rightarrow W_m'$ is such that (see Theorem 10 of \cite{HJM5})
	\begin{eqnarray*}
\langle \Phi_m'(u),\varphi\rangle&=&	\displaystyle \int_{\Omega} H_{u}^{p-1}\left(\nabla u\right) \nabla H_{u}\left(\nabla u\right) \cdot \nabla \varphi d x 	- \int_{\Omega}|u|^{\alpha - 2}u\varphi dx - \int_{\Omega}f(x)\varphi dx
\end{eqnarray*}
for each $\varphi \in W_m$, where $1<\alpha<p$, 
\[
H_{u}^p(\varsigma) = \gamma_{n,p}^{-\frac np}\; {\cal E}_{p, \Omega}^{n + p}(u) \int_{\mathbb{S}^{n-1}} \left( \int_{\Omega} | \nabla_\xi u(x) |^p\, dx \right)^{-\frac{n+p}{p}}  |\langle \xi, \varsigma \rangle|^p\, d\sigma(\xi)\ \ {\rm for}\ \varsigma \in \mathbb{R}^n\, 
\]
and $f\in L^{p'}(\Omega)$, with $f\neq 0$ in $\Omega$ and $\frac{1}{p}+\frac{1}{p'}=1$. Note that

$\int_{\Omega}H_{u}^{p-1}\left(\nabla u\right) \nabla H_{u}\left(\nabla u\right) \cdot \nabla \varphi d x =$
\[
\gamma_{n,p}^{-n}\; \mathcal{E}_{p, \Omega}^{n + p}(u) \int_{\mathbb{S}^{n-1}} \left( \int_{\Omega} | \nabla_\xi u(x) |^p\, dx \right)^{-\frac{n+p}{p}}\int_{\mathbb{R}^n} \{\nabla_\xi u\}^{p-1}\langle \nabla \varphi,\xi\rangle dxd\xi,
\]
where $\{x\}^p:= \vert x\vert^p sg(x)$ and so 
\[
\displaystyle \int_{\Omega} H_{u}^{p-1}\left(\nabla u\right) \nabla H_{u}\left(\nabla u\right) \cdot \nabla u d x=\mathcal{E}_{p, \Omega}^{p}(u)
\]
for all  $u \in W_0^{1,p}(\Omega)$ (see proof of Theorem 10 of \cite{HJM5}). Notice also that $[W_m]_{m \in \mathbb{N}}$ is dense in $W^{1,p}_0(\Omega)$.
		
As a application of the Theorem \ref{prop1}, we obtain a nontrivial critical point $u_m$ of $\Phi_m$ for each $m\geq 1$. Moreover, the sequence $\mathcal{E}_{p, \Omega}^{p}(u_m)$ is bounded and so $(u_m)$ admits a strongly convergence subsequence in $L^s(\Omega)$ once $s < p^*$.

	\section{Proof of Theorems \ref{prop1}, \ref{prop2} and \ref{prop3}}\label{s1}
	
	Recently, Leite and Montenegro \cite{LM2} showed that $\mathcal{E}_{p,\Omega}^p$ is strongly continuous in $W^{1,p}_0(\Omega)$. Evidently, the affine $L^{p}$ functional $\mathcal{E}_{p,\Omega}^p$ is continuous in $W_m$, for each $m\geq 1$. To make this work more complete, we will make a proof below following the same ideas, but a little more direct, since we are in a finite dimensional space.
	
	\begin{lemma}\label{lema1} Let $m\in\mathbb{N}$. The affine $L^{p}$ functional $\mathcal{E}_{p,\Omega}$ is continuous in $W_m$.	
	\end{lemma}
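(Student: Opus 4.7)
The plan is to exploit the finite-dimensional setting (so all norms on $W_m$ are equivalent) together with the two-sided bounds (1.3)--(1.4) already established in the introduction, and then apply dominated convergence on $\mathbb{S}^{n-1}$ to pass limits inside the integral defining $\mathcal{E}_{p,\Omega}$.

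More precisely, I would fix $u \in W_m$ and take an arbitrary sequence $u_k \to u$ in $W_m$. Since $W_m$ is finite-dimensional, this is equivalent to $\nabla u_k \to \nabla u$ in $L^p(\Omega)$, and hence for each fixed $\xi \in \mathbb{S}^{n-1}$ one has $\nabla_\xi u_k \to \nabla_\xi u$ in $L^p(\Omega)$, so $\|\nabla_\xi u_k\|_{L^p(\Omega)} \to \|\nabla_\xi u\|_{L^p(\Omega)}$ pointwise in $\xi$. I would split the argument into two cases according to whether the limit is $0$ or not.

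If $u = 0$, the right-hand inequality in (\ref{ine1}) gives $\mathcal{E}_{p,\Omega}(u_k) \leq \|\nabla u_k\|_{L^p(\Omega)} \to 0 = \mathcal{E}_{p,\Omega}(0)$, so continuity at the origin is immediate. If $u \neq 0$, by the left-hand inequality in (\ref{ine2}) applied in $W_m$, for $k$ sufficiently large one has $\|\nabla u_k\|_{L^p(\Omega)} \geq c > 0$ (with $c$ depending on $u$), whence
\[
\|\nabla_\xi u_k\|_{L^p(\Omega)}^{-n} \leq (D_2 c)^{-n} \quad \text{for every } \xi \in \mathbb{S}^{n-1}.
\]
This constant majorant is integrable on $\mathbb{S}^{n-1}$, so dominated convergence yields
\[
\int_{\mathbb{S}^{n-1}} \|\nabla_\xi u_k\|_{L^p(\Omega)}^{-n}\, d\sigma(\xi) \;\longrightarrow\; \int_{\mathbb{S}^{n-1}} \|\nabla_\xi u\|_{L^p(\Omega)}^{-n}\, d\sigma(\xi),
\]
and the finiteness and strict positivity of both sides (guaranteed again by (\ref{ine2}) since $u \neq 0$) let me raise to the continuous power $-1/n$ to conclude $\mathcal{E}_{p,\Omega}(u_k) \to \mathcal{E}_{p,\Omega}(u)$.

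The only delicate step is producing the uniform majorant on $\mathbb{S}^{n-1}$, which is exactly where the $\xi$-independent constants in (\ref{ine2}) (valid on $W_m$) are essential; without them, the integrand could blow up near directions $\xi$ for which $\nabla_\xi u$ is small, and pointwise convergence alone would not suffice. The rest is routine once the dichotomy $u = 0$ vs.\ $u \neq 0$ is handled separately, since the case $u = 0$ is singular for the integral representation but trivial via the envelope estimate (\ref{ine1}).
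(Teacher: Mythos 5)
Your proof is correct and follows essentially the same route as the paper's: the case $u=0$ is handled via the upper bound in (\ref{ine1}), and the case $u\neq 0$ via the $\xi$-independent lower bound from (\ref{ine2}) to produce a uniform majorant for $\Vert\nabla_\xi u_k\Vert_{L^p(\Omega)}^{-n}$ and then dominated convergence on $\mathbb{S}^{n-1}$. The only cosmetic difference is that the paper phrases the convergence of the integrand as uniform in $\xi$ while you use pointwise convergence plus domination; both are valid instances of the same argument.
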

	
	\begin{proof}
	Let $u_k \rightarrow u$ in $W_m$. For $u = 0$, the inequality \eqref{ine1} implies that $\mathcal{E}_{p,\Omega}(u_k)\rightarrow 0=\mathcal{E}_{p,\Omega}(0)$.

Suppose now that $u \neq 0$. Note that, by inequality \eqref{ine1}, we get $\mathcal{E}_{p,\Omega}(u)>0$. Then, from inequality (\ref{ine2}), we have $\Vert \nabla_\xi u_k\Vert_{L^p(\Omega)}\rightarrow\Vert \nabla_\xi u\Vert_{L^p(\Omega)}$  uniform on $\xi\in \mathbb{S}^{n-1}$, $\Vert \nabla_\xi u\Vert_{L^p(\Omega)}>c_1$ for every $\xi\in \mathbb{S}^{n-1}$ and $\Vert \nabla_\xi u_k\Vert_{L^p(\Omega)}^{-n}\leq c_2$, where $c_1,c_2>0$ are independents of $\xi$. Thus, $\Vert \nabla_\xi u_k\Vert_{L^p(\Omega)}^{-n}\rightarrow\Vert \nabla_\xi u\Vert_{L^p(\Omega)}^{-n}$ uniform on $\xi\in \mathbb{S}^{n-1}$ and so, by the dominated convergence theorem, we conclude $\mathcal{E}_{p,\Omega}(u_k)\rightarrow \mathcal{E}_{p,\Omega}(u)$.	This finish the proof.
	\end{proof}

The following lemma will be important to apply the general Brouwer's Fixed Point Theorem. This theorem states that if $E$ is a bounded and closed subset in $\mathbb{R}^m$ and homeomorphic to the closed unit ball, then any continuous mapping $S:E\rightarrow E$ admits a fixed point (see \cite{YL}).

\begin{lemma}\label{lemhomeo}
The affine ball $\overline{B}^m_\varrho(\zeta_0)=\{z \in \mathbb{R}^m; \lfloor z-\zeta_0\rfloor_{m}\leq \varrho\}$ and $\overline{B}^{p,m}_1(0)=\{z \in \mathbb{R}^m; \|z\|_{1,p,m}\leq 1\}$ are homeomorphic in $(\mathbb{R}^m, \|\cdot\|_{1,p,m})$.
\end{lemma}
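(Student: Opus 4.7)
The plan is to build an explicit radial homeomorphism between the two balls, stretching along rays from the respective centers so that the two ``gauges'' $\|\cdot\|_{1,p,m}$ and $\lfloor\cdot-\zeta_0\rfloor_m$ match up on corresponding boundaries. First, I would define
\[
h(\zeta)=\begin{cases} \zeta_0, & \zeta=0,\\[1mm] \zeta_0+\varrho\,\dfrac{\|\zeta\|_{1,p,m}}{\lfloor\zeta\rfloor_m}\,\zeta, & \zeta\neq 0,\end{cases}
\]
noting that $\lfloor\zeta\rfloor_m>0$ for $\zeta\neq 0$ by the lower bound in \eqref{ine1} (translated to $\mathbb{R}^m$ via \eqref{transf}). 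A direct computation gives $\lfloor h(\zeta)-\zeta_0\rfloor_m=\varrho\|\zeta\|_{1,p,m}$ using the positive $1$-homogeneity of $\mathcal{E}_{p,\Omega}$, so $h$ maps $\overline{B}^{p,m}_1(0)$ into $\overline{B}^m_\varrho(\zeta_0)$ and sends the unit sphere onto the affine sphere of radius $\varrho$.

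Next I would exhibit the inverse
\[
g(z)=\begin{cases} 0, & z=\zeta_0,\\[1mm] \dfrac{\lfloor z-\zeta_0\rfloor_m}{\varrho\,\|z-\zeta_0\|_{1,p,m}}\,(z-\zeta_0), & z\neq\zeta_0,\end{cases}
\]
and verify $h\circ g=\mathrm{id}$ and $g\circ h=\mathrm{id}$ by plugging in and simplifying (both compositions reduce algebraically after one substitution, since $\|g(z)\|_{1,p,m}=\lfloor z-\zeta_0\rfloor_m/\varrho$ and $\lfloor g(z)\rfloor_m=\lfloor z-\zeta_0\rfloor_m^2/(\varrho\|z-\zeta_0\|_{1,p,m})$).

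Continuity of $h$ and $g$ at points other than the centers is immediate: by Lemma \ref{lema1}, $\lfloor\cdot\rfloor_m$ is continuous, and both gauges are strictly positive off the centers, so the radial coefficients are continuous there. The main obstacle, and the only delicate point, is continuity at the centers $0$ and $\zeta_0$, where each formula has a $0/0$ appearance. This is handled by the quantitative equivalence
\[
C\,\|\zeta\|_{1,p,m}\le \lfloor\zeta\rfloor_m\le \|\zeta\|_{1,p,m}
\]
coming from \eqref{ine1} after the identification \eqref{transf}. It yields $\|h(\zeta)-\zeta_0\|_{1,p,m}=\varrho\|\zeta\|_{1,p,m}^2/\lfloor\zeta\rfloor_m\le(\varrho/C)\|\zeta\|_{1,p,m}\to 0$ as $\zeta\to 0$, and $\|g(z)\|_{1,p,m}=\lfloor z-\zeta_0\rfloor_m/\varrho\to 0$ as $z\to\zeta_0$ by continuity of $\lfloor\cdot\rfloor_m$. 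Thus $h$ is a homeomorphism, completing the proof.
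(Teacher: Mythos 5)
Your proof is correct and uses exactly the same construction as the paper: your maps $h$ and $g$ are precisely the paper's $G$ and $T$ (radial rescaling by the ratio of the two gauges), with the same verification that they are mutually inverse and continuous away from the centers. The only point of divergence is the continuity of $h$ at $0$: you give a direct quantitative bound $\|h(\zeta)-\zeta_0\|_{1,p,m}=\varrho\|\zeta\|_{1,p,m}^2/\lfloor\zeta\rfloor_m\le(\varrho/C)\|\zeta\|_{1,p,m}$ from the lower bound in \eqref{ine1}, whereas the paper argues by contradiction through convergent subsequences and the continuity of $T$; your estimate is the more direct (and arguably cleaner) of the two, but both are valid.
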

\begin{proof}
Let us consider the applications $T: \overline{B}^m_\varrho(\zeta_0) \to \overline{B}^{p,m}_1(0)$ defined by
\[
T(x):=\left\{
\begin{array}{lcl}
\frac{x-\zeta_0}{\varrho}\frac{\lfloor x-\zeta_0\rfloor_{m}}{\|x-\zeta_0\|_{1,p,m}}	&\mbox{ if }& x\neq \zeta_0,\\
0 &\mbox{ if }& x=\zeta_0
\end{array}	
\right.
\]	
and $G:\overline{B}^{p,m}_1(0)  \to \overline{B}^m_\varrho(\zeta_0)$ defined by
\[
G(x):=\left\{
\begin{array}{lcl}
	\varrho x\frac{\|x\|_{1,p,m}}{\lfloor x\rfloor_{m}} +\zeta_0	&\mbox{ if }& x\neq 0,\\
	\zeta_0 &\mbox{ if }& x=0.
\end{array}	
\right.
\]	
By standard computations we obtain that $G=T^{-1}$, that is, $T\circ G = G\circ T=Id$. By the continuity of the functional $u \in W_m \mapsto \mathcal{E}_{p, \Omega}(u)$ follows the continuity of the function $T$ in $x\neq \zeta_0$ and $G$ in $x\neq 0$. Now, $T$ is continuous in $x=\zeta_0$ since applying inequality (\ref{ine1}), we have
\[
\|T(x)-T(\zeta_0)\|_{1,p,m}=\left\|\frac{x-\zeta_0}{\varrho}\frac{\lfloor x-\zeta_0\rfloor_{m}}{\|x-\zeta_0\|_{1,p,m}}\right\|_{1,p,m}=\frac{1}{\varrho}\lfloor x-\zeta_0\rfloor_{m}\leq \frac{1}{\varrho}\|x-\zeta_0\|_{1,p,m}.
\] 

To proof the continuity of $G$ in $x=0$, let us consider a sequence $(x_k)$ in $\overline{B}^{p,m}_1(0)$, with $x_k \to 0$. Lets suppose that the exists a subsequence $(x_{k_j})$ of $(x_k)$ such that 
\[
\lim_{j \to \infty}G(x_{k_j}) = \theta \neq \zeta_0.
\]  
Hence, by continuity of $T$, we obtain 
\[
\lim_{j \to \infty}x_{k_j} = \lim_{j \to \infty}T\circ G(x_{k_j})= T(\theta) \neq 0
\]
which contradicts $x_k \to 0$. In conclusion
\[
\lim_{k \to \infty}G(x_{k}) = \zeta_0 = G(0).
\]
Therefore, $T$ and $G$ are continuous, hence $\overline{B}^m_\varrho(\zeta_0)$ and $\overline{B}^{p,m}_1(0)$ are homeomorphic in $(\mathbb{R}^m, \|\cdot\|_{1,p,m})$.
\end{proof}

{\bf Proof of Theorem \ref{prop1}:}	Firstly, there exists $c(m)>0$ such that
	\begin{equation}\label{equiv.}
	    \lfloor x\rfloor_m=\mathcal{E}_{p,\Omega}\left(\sum_{j=1}^{m}x_jw_j\right)\leq \left\|\sum_{j=1}^{m}x_jw_j\right\|_{1,p,m}\leq c(m)|x|_2, \,\,\ \forall x \in \mathbb{R}^m.
	\end{equation}
	We assume $F(x)\neq0$ for all $x \in \overline{B}^{m}_\varrho(0)$. From \eqref{equiv.}, we have
	\[
	\overline{B}^{p,m}_\varrho(0):=\{z \in \mathbb{R}^m; \|z\|_{1,p,m}\leq \varrho\} \subset \overline{B}^m_\varrho(0). 
	\]
	We define
	\[
	g: (\overline{B}^{m}_\varrho(0), \|\cdot\|_{1,p,m}) \rightarrow (\mathbb{R}^m, \|\cdot\|_{1,p,m})
	\]
	by
	\[
	g(x)=-\frac{\varrho}{\lfloor F(x)\rfloor_{m}}F(x)
	\]
	which maps $\overline{B}^{m}_\varrho(0)$ into itself and is continuous. Therefore, by general Brouwer's Fixed Point Theorem, we have $g$ admits a fixed point $x_0$. Since $x_0=g(x_0)$, we obtain $\lfloor x_0\rfloor_{m}=\lfloor g(x_0)\rfloor_{m}=\varrho>0$. But then by \eqref{equiv.} and assumptions, we get
	\[
	0<\varrho^2=\lfloor x_0\rfloor^2_{m}\leq c(m)^2|x_0|^2_2=c(m)^2\left\langle x_0,x_0\right\rangle=c(m)^2\left\langle x_0,g(x_0)\right\rangle
	\]
	\[
	=-c(m)^2\frac{\varrho}{\lfloor F(x_0)\rfloor_{m}}\left\langle x_0,F(x_0)\right\rangle\leq 0,
	\]
	which is a contradiction.\\
	
{\bf Proof of Theorem \ref{prop2}:}	Suppose, $F(x)\neq0$ for all $x \in \overline{B}^{m}_\varrho(\zeta_0)$. Notice that by \eqref{equiv.} we have
	\[
	\overline{B}^{p,m}_\varrho(\zeta_0):=\{z \in \mathbb{R}^m; \|z-\zeta_0\|_{1,p,m}\leq \varrho\} \subset \overline{B}^m_\varrho(\zeta_0). 
	\]
	Define
	\[
	g_0: (\overline{B}^{m}_\varrho(\zeta_0), \|\cdot\|_{1,p,m}) \rightarrow (\mathbb{R}^m, \|\cdot\|_{1,p,m})
	\]
	by
	\[
	g_0(x)=-\frac{\varrho}{\lfloor F(x)\rfloor_{m}}F(x) +\zeta_0
	\]
	which maps $\overline{B}^{m}_\varrho(\zeta_0)$ into itself and is continuous. Hence it has a fixed point $x_0$, by general Brouwer's Fixed Point Theorem. Since $x_0=g_0(x_0)$, we have $\lfloor x_0-\zeta_0\rfloor_{m}=\lfloor g(x_0)-\zeta_0\rfloor_{m}=\varrho>0$. But then by \eqref{equiv.}
	\begin{eqnarray*}
	0<\varrho^2&=&\lfloor x_0-\zeta_0\rfloor^2_{m}\leq c(m)^2|x_0-\zeta_0|^2_2=c(m)^2\left\langle x_0-\zeta_0,x_0-\zeta_0\right\rangle\\
	&=& c(m)^2\left\langle x_0-\zeta_0,g_0(x_0)-\zeta_0\right\rangle=-c(m)^2\frac{\varrho}{\lfloor F(x_0)\rfloor_{m}}\left\langle x_0-\zeta_0,F(x_0)\right\rangle\leq 0,
	\end{eqnarray*}
	by assumptions, which is a contradiction.\\

{\bf Proof of Theorem \ref{prop3}:} The proof of this result follows analogous to that of Theorem \ref{prop2}, by using the equivalence between two norms in $\mathbb{R}^m$ and replacing $\lfloor \cdot\rfloor_{m}$ by $\Vert \cdot \Vert_m$.

\section{Application: existence of nontrivial critical point of the functional $\Phi_m$}\label{S3}

Before proving our application, we will show the following continuity result. Namely:

\begin{proposition} \label{T3} Let $A:\mathbb{R}^m \to \mathbb{R}^m$ be a function such that $$A(\zeta)=(A_1(\zeta),A_2(\zeta),\dots, A_m(\zeta)),$$ where $\zeta=(\zeta_1, \zeta_2, ..., \zeta_m) \in \mathbb{R}^m$,
	\[\begin{array}{lll}
	A_j(\zeta)=\int_\Omega H_{u}^{p-1}(\nabla u) \nabla H_{u}(\nabla u) \cdot \nabla w_j\; dx 
\end{array}	\]
	$j=1,2,\dots,m$, and $u=\sum_{i=1}^m\zeta_i w_i\in W_m$. Then, the function $A$ is continuous. 
	\end{proposition}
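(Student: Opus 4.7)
The plan is to combine the explicit integral formula from the excerpt,
\[
A_j(\zeta)=\gamma_{n,p}^{-n}\,\mathcal{E}_{p,\Omega}^{n+p}(u)\int_{\mathbb{S}^{n-1}}\|\nabla_\xi u\|_{L^p(\Omega)}^{-(n+p)}\left(\int_{\Omega}\{\nabla_\xi u\}^{p-1}\langle\nabla w_j,\xi\rangle\,dx\right)d\sigma(\xi),
\]
where $u=\sum_{i=1}^{m}\zeta_i w_i$, with Lemma \ref{lema1} (continuity of $\mathcal{E}_{p,\Omega}$ on $W_m$) and with the two-sided bound (\ref{ine3}), which is uniform in $\xi\in\mathbb{S}^{n-1}$. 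The argument splits naturally into the singular point $\zeta=0$ and the regular regime $\zeta\neq 0$.

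For $\zeta=0$ I would set $A_j(0)=0$ (interpretable by continuity, since the Fréchet derivative of $\mathcal{E}_{p,\Omega}^p$ vanishes at $u=0$ by property (IV)). Hölder's inequality bounds the inner integral by $\|\nabla_\xi u\|_{L^p(\Omega)}^{p-1}\|\nabla w_j\|_{L^p(\Omega)}$; combining this with (\ref{ine3}), which gives $\|\nabla_\xi u\|_{L^p(\Omega)}^{-(n+1)}\leq C\,\mathcal{E}_{p,\Omega}^{-(n+1)}(u)$ uniformly in $\xi$, will yield the clean a~priori estimate
\[
|A_j(\zeta)|\leq K\,\|\nabla w_j\|_{L^p(\Omega)}\,\mathcal{E}_{p,\Omega}^{\,p-1}(u)
\]
for some constant $K=K(n,p,\Omega)$ independent of $\zeta$. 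Since $p>1$ and Lemma \ref{lema1} gives $\mathcal{E}_{p,\Omega}(u^{(k)})\to 0$ whenever $\zeta^{(k)}\to 0$, continuity at $0$ follows.

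For $\zeta\neq 0$ I would take $\zeta^{(k)}\to\zeta$ and write $u^{(k)}=\sum_i\zeta_i^{(k)}w_i$. Finite-dimensionality of $W_m$ gives $u^{(k)}\to u$ in $W^{1,p}_0(\Omega)$, and in particular $\nabla_\xi u^{(k)}\to\nabla_\xi u$ in $L^p(\Omega)$ for every $\xi$. I then apply dominated convergence on $\mathbb{S}^{n-1}$ to the outer integrand. For pointwise convergence in $\xi$, continuity of the Nemytskii operator $t\mapsto\{t\}^{p-1}$ from $L^p(\Omega)$ into $L^{p'}(\Omega)$ combined with Hölder gives convergence of the inner integral; uniform convergence in $\xi$ of $\|\nabla_\xi u^{(k)}\|_{L^p(\Omega)}$ follows from the estimate $\|\nabla_\xi(u^{(k)}-u)\|_{L^p(\Omega)}\leq\|\nabla(u^{(k)}-u)\|_{L^p(\Omega)}$, while (\ref{ine3}) together with $\mathcal{E}_{p,\Omega}(u)>0$ keeps $\|\nabla_\xi u^{(k)}\|_{L^p(\Omega)}$ bounded away from $0$ for large $k$ uniformly in $\xi$. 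For the required domination, the same Hölder/(\ref{ine3}) estimate used in the first case produces an $L^\infty(\mathbb{S}^{n-1})$ majorant, valid for large $k$. Dominated convergence on the sphere, together with $\mathcal{E}_{p,\Omega}^{n+p}(u^{(k)})\to\mathcal{E}_{p,\Omega}^{n+p}(u)$ from Lemma \ref{lema1}, then concludes $A_j(\zeta^{(k)})\to A_j(\zeta)$.

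The main obstacle is the case $\zeta=0$, where $H_u$ itself is undefined: the factor $\mathcal{E}_{p,\Omega}^{n+p}(u)$ competes with the blowing-up weight $\|\nabla_\xi u\|_{L^p(\Omega)}^{-(n+p)}$ inside the $\xi$-integrand, so the limit cannot be passed inside naively. The key observation is that after pairing $H_u^{p-1}\nabla H_u$ with $\nabla w_j$ and invoking (\ref{ine3}), the cancellation reduces everything to a single harmless factor $\mathcal{E}_{p,\Omega}^{\,p-1}(u)$, which vanishes because $p>1$.
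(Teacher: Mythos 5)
Your proof is correct and follows essentially the same route as the paper: the same splitting into the singular case $\zeta^0=0$, where the cancellation $\mathcal{E}_{p,\Omega}^{n+p}(u)\,\Vert\nabla_\xi u\Vert_{L^p(\Omega)}^{-(n+p)}\leq D_3^{\,n+p}$ coming from (\ref{ine3}) leaves a factor vanishing like $\mathcal{E}_{p,\Omega}^{\,p-1}(u)$, and the regular case $\zeta^0\neq 0$, handled by Lemma \ref{lema1} together with dominated convergence on $\mathbb{S}^{n-1}$. The only blemish is the exponent $-(n+1)$, which should read $-(n+p)$; your final a priori bound is nonetheless the correct one, and you in fact spell out the $\zeta^0\neq 0$ case (Nemytskii continuity of $t\mapsto\{t\}^{p-1}$ from $L^p$ to $L^{p'}$) in more detail than the paper does.
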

	
\begin{proof}
For this, let $\zeta^k\rightarrow\zeta^0$ in $\mathbb{R}^m$ and $u_k=\sum_{i=1}^m\zeta^k_i w_i\in W_m$, $k=0,1,2,\dots$. We fixed $j\in\{1,2,\dots,m\}$. Note that
\[
A_j(\zeta^k) = \gamma_{n,p}^{-n}\; \mathcal{E}_{p, \Omega}^{n + p}(u_k) \int_{\mathbb{S}^{n-1}} \Vert\nabla_\xi u_k\Vert_{L^p(\Omega)}^{-n-p}\int_{\Omega} \{\nabla_\xi u_k\}^{p-1}\langle \nabla w_j,\xi\rangle dxd\xi.
\]

If $\zeta^0= 0$, then by inequality (\ref{ine2}), we have $\Vert \nabla_\xi u_k\Vert_{L^p(\Omega)}\rightarrow 0$  uniform on $\xi\in \mathbb{S}^{n-1}$. Note that, using inequality (\ref{ine3}), there exists $D_0>0$ independent of $\xi$ such that
\[
\vert A_j(\zeta^k)\vert \leq \gamma_{n,p}^{-n}\; D_0 \int_{\mathbb{S}^{n-1}} \int_{\Omega} \vert\langle\nabla u_k,\xi\rangle\vert^{p-1}\vert \langle \nabla w_j,\xi\rangle\vert dxd\xi.
\]

Now, by H\"{o}lder and Cauchy-Schwarz inequalities and dominated convergence theorem, we have

\[
\vert A_j(\zeta^k)\vert \leq \gamma_{n,p}^{-n}\; D_0\vert\Omega\vert^{\frac{1}{p}} \Vert \nabla w_j\Vert_{L^p(\Omega)} \int_{\mathbb{S}^{n-1}} \Vert \nabla_\xi u_k\Vert^{p-1}_{L^p(\Omega)}d\xi\rightarrow 0=A_j(\zeta^0).
\]

If $\zeta^0\neq 0$, then by Lemma \ref{lema1}, we have $\mathcal{E}_{p, \Omega}^{n + p}(u_k)\rightarrow \mathcal{E}_{p, \Omega}^{n + p}(u)$. Finally, using the same arguments used in the proof of Lemma \ref{lema1} and in the proof of the previous case, we have $A_j(\zeta^k)\rightarrow A_j(\zeta^0)$. This concludes the proof.
\end{proof}

\begin{theorem}\label{TP2}
	Suppose that $f\in L^{p'}(\Omega)$ with $f\neq 0$ in $\Omega$, $1<\alpha<p$. Let $\Phi_m$ be the functional defined in (\ref{funct.}). Then there exists a nontrivial solution $u\in W_m$ satisfying $\Phi'_m(u)=0$.
\end{theorem}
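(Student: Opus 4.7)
The plan is to apply Theorem~\ref{prop1} to the vector field $F:\mathbb{R}^m\to\mathbb{R}^m$ whose $j$-th component is $F_j(\zeta):=\langle\Phi_m'(u),w_j\rangle$, where $u=\sum_{i=1}^m\zeta_iw_i$ is the image of $\zeta$ under the isometry \eqref{transf}. Since $\{w_1,\dots,w_m\}$ spans $W_m$, the equation $F(\zeta)=0$ is equivalent to $\Phi_m'(u)=0$ in $W_m'$, which is the desired critical point relation. Explicitly,
$$F_j(\zeta)=A_j(\zeta)-\int_\Omega |u|^{\alpha-2}u\, w_j\, dx-\int_\Omega f(x)\, w_j\, dx,$$
the first term being continuous by Proposition~\ref{T3}, the second by continuity of the Nemytskii operator on the finite-dimensional space $W_m$, and the third being constant; so $F$ is continuous on all of $\mathbb{R}^m$.

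Next, I would compute the coercivity quantity $\langle F(\zeta),\zeta\rangle$. By linearity and the identity $\int_\Omega H_u^{p-1}(\nabla u)\nabla H_u(\nabla u)\cdot\nabla u\,dx=\mathcal{E}_{p,\Omega}^p(u)$ recalled in the introduction,
$$\langle F(\zeta),\zeta\rangle=\langle \Phi_m'(u),u\rangle=\mathcal{E}_{p,\Omega}^p(u)-\|u\|_{L^\alpha(\Omega)}^\alpha-\int_\Omega f\,u\,dx.$$
On the affine sphere $\lfloor\zeta\rfloor_m=\mathcal{E}_{p,\Omega}(u)=\varrho$, the affine Poincaré-Sobolev inequality~\eqref{APSob} (applied with $q=\alpha$ and with $q=p$, both admissible since $1<\alpha<p\le p^*$) together with H\"older's inequality on the linear term produce
$$\langle F(\zeta),\zeta\rangle\ \ge\ \varrho^p-\bigl(\mu_{p,\alpha}^{\mathcal{A}}\bigr)^{-\alpha}\varrho^\alpha-\|f\|_{L^{p'}(\Omega)}\bigl(\mu_{p,p}^{\mathcal{A}}\bigr)^{-1}\varrho.$$
Because $p>\alpha>1$, the term $\varrho^p$ dominates as $\varrho\to\infty$, so I fix $\varrho>0$ large enough that the right-hand side is nonnegative.

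Applying Theorem~\ref{prop1} to this $F$ and $\varrho$ then yields $z_0\in\overline{B}^m_\varrho(0)$ with $F(z_0)=0$, and setting $u:=\sum_{i=1}^m(z_0)_iw_i$ gives $\Phi_m'(u)=0$ in $W_m'$. For nontriviality I argue by contradiction: if $u=0$, then $F(0)=0$ forces $\int_\Omega f w_j\,dx=0$ for every $j\le m$; but since $\bigcup_m W_m$ is dense in $W^{1,p}_0(\Omega)$ and $f\in L^{p'}(\Omega)\setminus\{0\}$, the equalities $\int_\Omega f w_j\,dx=0$ for all $j\in\mathbb{N}$ would force $f=0$ a.e. Hence, for $m$ large enough, $u\neq 0$.

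The main obstacle is the coercivity bound on the affine sphere: the functional $\mathcal{E}_{p,\Omega}^p$ is neither a norm nor convex, so the usual Sobolev-type estimates cannot be invoked directly. One must carefully use the affine Poincaré-Sobolev inequality~\eqref{APSob} to control both the $L^\alpha$ and the linear terms by $\mathcal{E}_{p,\Omega}$ itself, after which the exponent hierarchy $1<\alpha<p$ makes $\varrho^p$ dominate. Everything else reduces to the continuity of $F$, supplied by Proposition~\ref{T3}, and the topological comparison encapsulated in Theorem~\ref{prop1}.
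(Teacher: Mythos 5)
Your proposal is correct and follows essentially the same route as the paper: the same componentwise field $F_j(\zeta)=\langle\Phi_m'(u),w_j\rangle$, continuity via Proposition~\ref{T3}, the identity $\langle F(\zeta),\zeta\rangle=\mathcal{E}_{p,\Omega}^p(u)-\|u\|_{L^\alpha}^\alpha-\int_\Omega fu\,dx$, the affine Poincar\'e--Sobolev and H\"older bounds yielding coercivity on the affine sphere, and Theorem~\ref{prop1}. Your nontriviality argument is in fact more careful than the paper's one-line assertion, correctly noting that $F(0)\neq 0$ (hence $u\neq 0$) is only guaranteed once $m$ is large enough that some $w_j$ with $j\le m$ satisfies $\int_\Omega f w_j\,dx\neq 0$, which the density of $\bigcup_m W_m$ provides.
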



\begin{proof}
Define the function $F:\mathbb{R}^m \to \mathbb{R}^m$ such that $$F(\zeta)=(F_1(\zeta),F_2(\zeta),\dots, F_m(\zeta)),$$ where $\zeta=(\zeta_1, \zeta_2, ..., \zeta_m) \in \mathbb{R}^m$,
	\[\begin{array}{lll}
	F_j(\zeta)&=&\displaystyle \int_{\Omega} H_{u}^{p-1}\left(\nabla u\right) \nabla H_{u}\left(\nabla u\right) \cdot \nabla w_j d x 
	 \\ && - \int_{\Omega}|u|^{\alpha - 2}uw_j dx  - \int_{\Omega}f(x)w_j dx, 
\end{array}	\]
	$j=1,2,\dots,m$, and $u=\sum_{i=1}^m\zeta_i w_i\in W_m$. 
	
By Proposition \ref{T3}, we have $F$ is a continuous function. Therefore, by H\"{o}lder's inequality, we get
	\[
	\begin{array}{lll}
	\left\langle F(\zeta),\zeta \right\rangle&=&\displaystyle  \mathcal{E}_{p, \Omega}^{p}(u)  - \|u\|^{\alpha}_{L^{\alpha}(\Omega)} - \int_{\Omega}f(x)u dx\\
	&\geq &	\mathcal{E}_{p, \Omega}^{p}(u)  - \|u\|^{\alpha}_{L^{\alpha}(\Omega)} - \|f\|_{L^{p'}(\Omega)}\|u\|_{L^{p}(\Omega)},
	\end{array}
	\]
	where $\frac{1}{p'}+\frac{1}{p}=1$. By affine Poincaré-Sobolev inequality (see inequality (\ref{APSob})), we obtain
	\[
	\begin{array}{lll}
	\left\langle F(\zeta),\zeta \right\rangle
	&\geq &	\mathcal{E}_{p, \Omega}^{p}(u)  - \left(\mu_{p,\alpha}^{\cal A}\right)^{-\alpha}\mathcal{E}_{p, \Omega}^{\alpha}(u) - \left(\mu_{p,p}^{\cal A}\right)^{-1}\|f\|_{L^{p'}(\Omega)}\mathcal{E}_{p, \Omega}(u).
	\end{array}
	\]

	Now, with the notations of Theorem \ref{prop1}, let  $\lfloor\zeta\rfloor_m=\varrho$ for some $\varrho>0$ to be taken below. Thus, we have
\[	\begin{array}{rcl}
\displaystyle\left\langle F(\zeta),\zeta\right\rangle  &\geq& \displaystyle \varrho^p - \left(\mu_{p,\alpha}^{\cal A}\right)^{-\alpha}\varrho^\alpha - \left(\mu_{p,p}^{\cal A}\right)^{-1}\|f\|_{L^{p'}(\Omega)}\varrho.
\end{array}\]

If $\varrho$ is such that
	\[
	\varrho>\max\left\{ \left[2\left(\mu_{p,p}^{\cal A}\right)^{-1}\|f\|_{L^{p'}(\Omega)}\right]^{\frac{1}{p-1}}, \left[2\left(\mu_{p,\alpha}^{\cal A}\right)^{-\alpha}\right]^{\frac{1}{p-\alpha}}\right\}
	\]
	then 
	\[
	\left\langle F(\zeta),\zeta\right\rangle >0.
	\]

We take 
\[
	\varrho:=\max\left\{ \left[2\left(\mu_{p,p}^{\cal A}\right)^{-1}\|f\|_{L^{p'}(\Omega)}\right]^{\frac{1}{p-1}}, \left[2\left(\mu_{p,\alpha}^{\cal A}\right)^{-\alpha}\right]^{\frac{1}{p-\alpha}}\right\}+1.
	\]	
	Thus,  Theorem \ref{prop1} ensure the existence of   $z_0 \in \mathbb{R}^m$ with $\lfloor y\rfloor_m\leq \varrho$ and such that $F(z_0)=0$. In other words, there exists $u_m \in W_m$ verifying
	\[
		\mathcal{E}_{p, \Omega}(u_m)\leq \varrho,
	\]
	and such that
	\[
	\displaystyle \int_{\Omega} H_{u_m}^{p-1}\left(\nabla u_m\right) \nabla H_{u_m}\left(\nabla u_m\right) \cdot \nabla w d x = \int_{\Omega}|u_m|^{\alpha - 2}u_mw dx +\int_{\Omega}f(x)w dx 
	\]
	for all  $w \in W_m$. Then $u_m$ is nontrivial. In particular, considering $w=u_m$ we obtain
	\begin{eqnarray*} 
	\mathcal{E}_{p, \Omega}^{p}(u_m)   &=& \|u_m\|^{\alpha}_{L^\alpha(\Omega)}+ \int_{\Omega}f(x)u_m dx.
	\end{eqnarray*}
	
	\begin{remark}\label{rem1}
    It is important to mention that $\varrho$, does not depend on $m$.
\end{remark}
	
Thus, $\mathcal{E}_{p, \Omega}^{p}(u_m)$ is bounded and so, by Theorem 6.5.3 of
\cite{T1}, there exists $u \in W^{1,p}_0(\Omega)$ such that $u_m \rightarrow u$ strongly in $L^s(\Omega)$ once $s < p^*$, up to a subsequence.

\end{proof} 

\subsection*{Acknowledgements}

The first author was partially supported by FAPEMIG/ APQ-02375-21, APQ-04528-22, FAPEMIG/RED-00133-21 and CNPq Process 307575/2019-5 and 101896/2022-0.\\
	The second author was partially supported by CNPq/Brazil (PQ 316526/2021-5) and Fapemig/Brazil (Universal-APQ-00709-18).



\end{document}